\newcommand{\red}[1]{{\color{red}#1}}
\numberwithin{equation}{section}
\newtheorem{theorem}{Theorem}[section]
\newtheorem{lemma}[theorem]{Lemma}
\newtheorem{corollary}[theorem]{Corollary}
\theoremstyle{definition}
\newtheorem{definition}[theorem]{Definition}
\newtheorem{construction}[theorem]{Construction}
\newtheorem{remark}[theorem]{Remark}
\newtheorem{example}[theorem]{Example}
\begin{document}

\title[Partial coloring, vertex decomposability, and SCM simplicial complexes]{Partial coloring, vertex decomposability,
and sequentially Cohen-Macaulay simplicial complexes}

\thanks{\red{The final version of this paper appeared in Journal of Commutative Algebra, Vol 7, (2015), pp. 337--352.  This version of the paper corrects
    a small error in the statement in Theorem 4.6.  Subsequent results are
    still correct, although their statements and proofs need some small changes.
    We thank Yan Gu for bringing this to our attention.
    To highlight the changes from the journal version, all changes are in
    RED.
}}

\author{Jennifer Biermann}
\address{Department of Mathematics and Statistics, 451A Clapp Lab, 
Mount Holyoke College, South Hadley, MA 01075, USA}
\email{jbierman@mtholyoke.edu}

\author{Christopher A. Francisco}
\address{Department of Mathematics, Oklahoma State University,
401 Mathematical Sciences, Stillwater, OK 74078}
\email{chris@math.okstate.edu}
\urladdr{http://www.math.okstate.edu/~chris}

\author{Huy T\`ai H\`a}
\address{Tulane University, Department of Mathematics,
6823 St. Charles Ave., New Orleans, LA 70118}
\email{tha@tulane.edu}
\urladdr{http://www.math.tulane.edu/~tai}

\author{Adam Van Tuyl}
\address{Department of Mathematical Sciences,
Lakehead University,
Thunder Bay, ON P7B 5E1, Canada}
\email{avantuyl@lakeheadu.ca}
\urladdr{http://flash.lakeheadu.ca/~avantuyl}

\keywords{simplicial complex, vertex decomposable, whiskers, sequentially
Cohen-Macaulay}
\subjclass[2000]{05E45, 05A15, 13F55}

\begin{abstract}
In attempting to understand how combinatorial modifications alter algebraic
properties of monomial ideals, several authors have investigated the process of
adding ``whiskers'' to graphs.
In this paper, we study a similar construction to build
a simplicial complex $\Delta_\chi$
from a coloring $\chi$ of a subset
of the vertices of $\Delta$, and give necessary and sufficient conditions
for this construction to produce vertex decomposable simplicial complexes.
We apply this work to strengthen and give new proofs
about sequentially Cohen-Macaulay edge
ideals of graphs.
\end{abstract}

\maketitle

%%%%%%%%%%%%%%%%%%%%%%%%%%%%%%%%%%%%%%%%%%%%%%%%%%%%%%%%%%%%%%%%%%%%%%%%

\section{Introduction}

Square-free monomial ideals are intimately connected to combinatorics. This connection
raises the natural question: how do changes in combinatorial structures affect
algebraic properties of associated square-free monomial ideals? In \cite{V},
Villarreal investigates the process of adding whiskers to a finite simple graph,
and (citing also Fr\"oberg, Herzog, and Vasconcelos) proves that the edge ideal
of a graph with whiskers added to every vertex is always Cohen-Macaulay.
To add a \emph{whisker} to a vertex, one adds an additional vertex and an edge
between the old vertex and the new one.

Generalizing Villarreal's work, in \cite{FH} the second and the third authors
studied additions of whiskers to subsets of the vertices that produce
sequentially Cohen-Macaulay edge ideals. The configuration of the whiskers,
not the number, determines when the resulting ideals are sequentially
Cohen-Macaulay, demonstrating the subtlety of the problem. The techniques
 in \cite{FH} are mostly algebraic, focusing on when the cover ideals
are componentwise linear, a property which is equivalent to the Alexander
dual being sequentially Cohen-Macaulay.

In a different direction, several authors have used methods from combinatorial topology
to study similar phenomena. The primary combinatorial
object in these efforts is the independence complex of a graph,
the simplicial complex whose Stanley-Reisner
ideal coincides with the edge ideal of the graph. For instance,
Woodroofe \cite{W} and Dochtermann and Engstr\"om \cite{DE}
use combinatorial topology to prove that the
independence complex of a chordal graph is vertex decomposable, implying that
the edge ideal is sequentially Cohen-Macaulay.
Dochtermann and Engstr\"om \cite{DE} also show that
the independence complex of a completely whiskered graph is a pure vertex decomposable
simplicial complex, and consequently, Cohen-Macaulay, thus giving a
combinatorial topological proof of Villarreal's result. Cook and Nagel \cite{CN}
use full clique-whiskering, a technique that begins by partitioning the
vertex set of a graph into cliques. For each of these cliques, one adds a
new vertex and connects it to each vertex in the clique. Cook and Nagel
 prove that fully clique-whiskered graphs are vertex-decomposable
\cite[Theorem 3.3]{CN}; when the cliques in the partition each consist of a
single vertex, this recovers the results of Villarreal and Dochtermann-Engstr\"om.

The first and fourth authors take a blended approach in \cite{BVT}
to extend these results about independence complexes of graphs to all simplicial complexes.
Starting with any coloring $\chi$ of the vertices of $\Delta$,  they construct
a new simplicial complex $\Delta_\chi$ that is vertex decomposable.  The whiskering construction
of Villarreal and the clique-whiskering technique of Cook and Nagel \cite{CN}
become special instances of this construction.

The constructions in \cite{BVT, CN, V} always result in \emph{pure} vertex
decomposable simplicial complexes (and thus, Cohen-Macaulay complexes). The
algebraic results of \cite{FH} are therefore not a consequence of these results because the
corresponding independence
complex associated to the partially whiskered graph is not necessarily pure.

In this paper, in the spirit of \cite{FH}, we extend the construction in
\cite{BVT} to partial whiskerings of simplicial complexes.  We start with
a partial coloring $\chi$ of $\Delta$ (see Definition \ref{partialcolor}) and use $\chi$ and
$\Delta$ to build a new simplicial complex $\Delta_{\chi}$  (see Construction \ref{deltaChi}),
which we call a partially whiskered simplicial complex.
We give a necessary and sufficient condition for a
partially whiskered simplicial complex to be vertex decomposable.

\begin{theorem}[Theorem \ref{vertexDecomposable}] \label{thm.intro}
Let $\Delta$ be a simplicial
complex on the vertex set $V$, and let
$W$ be a subset of $V$.  Let $\chi$ be the $s$-coloring of $\Delta|_W$ given by
$W= W_1 \cup \dots \cup W_s$.
Then $\Delta_\chi$ is vertex decomposable if and only if
${\rm link}_\Delta(\mu)|_{\overline{W}}$ is vertex
decomposable for every face $\mu$ of $\Delta$ such that  $\mu \subseteq W$.
\end{theorem}

Theorem~\ref{vertexDecomposable} has a number of consequences. Corollary~\ref{numericalBound}
shows that when $W = V$,
$\Delta_\chi$ is always vertex decomposable, thus recovering the main result of \cite{BVT}.
Corollary~\ref{numericalBound} also gives the analog to the numerical bound
of the second and third authors \cite{FH} for graphs. Namely, if one has a simplicial
complex with $n$ vertices, and $|W| \geq n-3$, one
gets a vertex decomposable simplicial complex.
As in the case of graphs,
this bound is sharp (see Example~\ref{e.numericalBound}).

In Section 4, we apply Theorem~\ref{vertexDecomposable} to study
edge ideals of graphs.  In particular, we get necessary and sufficient conditions
for a whiskered graph to be vertex decomposable (see Theorem \ref{whiskeredgraphs}).
This result yields Corollary \ref{resultFH},  a
new proof for \cite[Theorem 3.3]{FH} specifying which configurations of whiskers
force the edge ideal to be sequentially Cohen-Macaulay;
this provides the combinatorial approach to the results
of \cite{FH} sought in \cite{DE}.  We also use
Theorem ~\ref{vertexDecomposable} to classify which whiskered bipartite graphs
are sequentially Cohen-Macaulay (see Theorem \ref{classifybipartite}).

%As a partial converse to \cite[Theorem 3.3]{FH}, \cite[Theorem
%4.1]{FH} demonstrates that if the induced subgraph on $V \smallsetminus S$ is not
%sequentially Cohen-Macaulay, then neither is the graph obtained by adding
%whiskers to the vertices of $S$. This is an algebraic result that has so far
%eluded combinatorial topological techniques used in \cite{DE, W}. We give, in
%Theorem~\ref{thm.necessary}, a combinatorial analog of this result, showing that
%condition (1) of Theorem~\ref{vertexDecomposable} is a necessary condition.

Our paper is organized as follows.  In Section 2, we recall the relevant
background.  In Section 3 we present the main theorems
and derive some of their consequences.  Section 4 applies our results
to independence complexes of graphs.

{\bf Acknowledgements.}
We used CoCoA \cite{C}, Macaulay2 \cite{Mt} and the package
{\it SimplicialDecomposabilty} by David Cook II \cite{Ct} for our computer
experiments. Francisco is partially supported by a grant from the Simons Foundation,  
\#199124. H\`a is partially supported by NSA grant H98230-11-1-0165. Van Tuyl
acknowledges the support of NSERC.

%%%%%%%%%%%%%%%%%%%%%%%%%%%%%%%%%%%%%%%%%%%%%%%%%%%%%%%%%%%%%%%%%%%

\section{Background}

%We work over the polynomial rings
%$S = k[x_1, \dots, x_n]$ and $R= k[x_1, \dots, x_n, y_1, \dots, y_s]$,
%where $k$ is any field.
We recall the relevant background
on simplicial complexes.

\begin{definition} A finite {\it simplicial complex} $\Delta$ on a
finite vertex set $V$ is a
collection of subsets of $V$ with the
property that if $\sigma \in \Delta$ and $\tau \subseteq \sigma$,
then $\tau \in \Delta$.  The elements of $\Delta$ are called {\it faces}.
The maximal faces of $\Delta$, with respect to inclusion, are
the {\it facets}.
\end{definition}

The vertex sets of our simplicial complexes will
be either the set $\{x_1, \dots, x_n\}$
or the set $\{x_1, \dots, x_n, y_1, \dots, y_s\}$.
If $\Delta$ is a simplicial complex and $\sigma \in \Delta$, then
we say $\sigma$ has {\it dimension} $d$ if $|\sigma| = d+1$
(by convention, the empty set has dimension $-1$).
%The {\it dimension} of $\Delta$ is  the maximum of the dimensions of its facets.
We say $\Delta$ is {\it pure} if all of its facets have the same dimension; otherwise
$\Delta$ is {\it non-pure}.
If $F_1,\ldots,F_t$ is a complete
list of the facets of $\Delta$, we sometimes write $\Delta$ as
$\Delta = \langle F_1,\ldots,F_t\rangle$.
If $\sigma \in \Delta$ is a face, then the
{\it deletion} of $\sigma$ from $\Delta$ is the simplicial complex defined by
\[
\Delta \setminus \sigma = \{ \tau \in \Delta ~|~ \sigma \not \subseteq \tau\} .
\]
The {\it link} of $\sigma$ in $\Delta$ is the simplicial complex defined by
\[
\rm{link}_{\Delta}(\sigma) = \{ \tau \in \Delta ~|~ \sigma \cap \tau =
\emptyset, \sigma \cup \tau \in \Delta\}.
\]
When $\sigma = \{v\}$, we shall abuse notation and
write $\Delta \setminus v$ (respectively ${\rm link}_{\Delta}(v)$) for
$\Delta \setminus \{v\}$ (respectively ${\rm link}_{\Delta}(\{v\})$).

We shall be particularly interested in the class of vertex decomposable
simplicial complexes.  This class was first introduced in the pure case by Provan
and Billera \cite{BP} and in the non-pure case by Bj\"orner and Wachs \cite{BW}.
We recall the non-pure version.

\begin{definition} A simplicial complex $\Delta$ on vertex set $V$
is called \emph{vertex decomposable} if
\begin{enumerate}
\item $\Delta$ is a simplex, or
\item there is some vertex $v \in V$ such that $\Delta \setminus v$ and
${\rm link}_\Delta(v)$ are vertex
decomposable and do not share any facets; such a vertex $v$ is called a
\emph{shedding} vertex of $\Delta$.
\end{enumerate}
\end{definition}

\begin{remark}\label{SCM}
When $\Delta$ is vertex decomposable, then $\Delta$ also inherits other
combinatorial and algebraic properties.  In particular, if $\Delta$ is pure
and vertex decomposable,
then $\Delta$ has a pure shelling, and its Stanley-Reisner ring $R/I_{\Delta}$
is Cohen-Macaulay.  If $\Delta$ is non-pure and vertex decomposable, then
$\Delta$ is still shellable,
in the non-pure sense of
Bj\"orner and Wachs \cite{BW}, and its Stanley-Reisner ring
$R/I_{\Delta}$ is sequentially Cohen-Macaulay (see Section 4 for a definition).
We point the reader
to the text of Herzog and Hibi \cite{HH} for a complete treatment of these ideas.
\end{remark}

For simplicial complexes $\Delta$ and $\Omega$ over disjoint vertex
sets $V$ and $U$, respectively, the \emph{join} of $\Delta$ and $\Omega$,
denoted by $\Delta \cdot \Omega$, is the simplicial complex over the vertex set
$V \cup U$, whose faces are $\{\sigma \cup \tau ~|~ \sigma \in \Delta, \tau \in
\Omega\}$.  Provan and Billera proved:

\begin{theorem}[\protect{\cite[Proposition 2.4]{BP}}] \label{thm.BP}
The join $\Delta \cdot \Omega$ is vertex decomposable if and only if both
$\Delta$ and $\Omega$ are vertex decomposable.
\end{theorem}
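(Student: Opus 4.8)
The plan is to induct on the total number of vertices $|V| + |U|$, proving both implications simultaneously and relying throughout on a few elementary facts about joins. Because $V$ and $U$ are disjoint, every face $\rho$ of $\Delta \cdot \Omega$ decomposes uniquely as $\rho = (\rho \cap V) \cup (\rho \cap U)$, with $\rho \cap V \in \Delta$ and $\rho \cap U \in \Omega$. From this I would first record two identities: for any vertex $v \in V$,
\[
(\Delta \cdot \Omega) \setminus v = (\Delta \setminus v) \cdot \Omega
\qquad\text{and}\qquad
\operatorname{link}_{\Delta \cdot \Omega}(v) = \operatorname{link}_\Delta(v) \cdot \Omega ,
\]
with symmetric statements for $v \in U$. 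The same unique-decomposition principle shows that the facets of $\Delta \cdot \Omega$ are exactly the sets $F \cup G$ where $F$ is a facet of $\Delta$ and $G$ is a facet of $\Omega$; this facet description is the technical workhorse of the argument. For the base case, $\Delta \cdot \Omega$ is a simplex if and only if both $\Delta$ and $\Omega$ are simplices (the unique facet of the join decomposes into the unique facets of the factors), so in that case all three complexes are vertex decomposable at once.

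For the forward direction, assume $\Delta \cdot \Omega$ is vertex decomposable but not a simplex, and let $w$ be a shedding vertex; by symmetry suppose $w \in V$. The two identities give that $(\Delta \setminus w) \cdot \Omega$ and $\operatorname{link}_\Delta(w) \cdot \Omega$ are vertex decomposable, so the induction hypothesis (applied to these joins, which have fewer vertices) forces $\Delta \setminus w$, $\operatorname{link}_\Delta(w)$, and $\Omega$ each to be vertex decomposable. It then remains to check that $w$ is a shedding vertex of $\Delta$, i.e. that $\Delta \setminus w$ and $\operatorname{link}_\Delta(w)$ share no facet. If they shared a facet $F$, then for any facet $G$ of $\Omega$ the set $F \cup G$ would, by the facet description, be a common facet of $(\Delta \setminus w) \cdot \Omega$ and $\operatorname{link}_\Delta(w) \cdot \Omega$, contradicting that $w$ sheds in $\Delta \cdot \Omega$. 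Hence $\Delta$ is vertex decomposable, and so is $\Omega$.

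For the reverse direction, assume $\Delta$ and $\Omega$ are vertex decomposable. If both are simplices we are done by the base case; otherwise, after possibly swapping the two factors, let $v$ be a shedding vertex of $\Delta$. I claim $v$ sheds in $\Delta \cdot \Omega$. By the identities, $(\Delta \cdot \Omega) \setminus v = (\Delta \setminus v) \cdot \Omega$ and $\operatorname{link}_{\Delta \cdot \Omega}(v) = \operatorname{link}_\Delta(v) \cdot \Omega$; each is a join of a vertex decomposable complex on strictly fewer vertices with the vertex decomposable $\Omega$, hence is vertex decomposable by the induction hypothesis. Moreover they share no facet: a common facet would restrict, via the facet description, to a common facet of $\Delta \setminus v$ and $\operatorname{link}_\Delta(v)$, contradicting that $v$ sheds in $\Delta$. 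Therefore $\Delta \cdot \Omega$ is vertex decomposable.

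I expect the main obstacle to be the facet bookkeeping rather than any single hard idea: one must prove carefully that the facets of a join are precisely the unions of facets of its factors --- this is exactly where disjointness of $V$ and $U$ is used --- and then transport the ``no shared facets'' condition across the join in both directions. Degenerate cases, such as a factor equal to the empty simplex $\{\emptyset\}$ or a factor that becomes a simplex after a deletion, should be verified separately but present no real difficulty.
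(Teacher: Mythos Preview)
Your proof is correct. Note, however, that the paper does not actually supply a proof of this theorem: it is stated with a citation to Provan and Billera and then used as a black box, so there is no argument in the paper to compare against. Your induction on $|V|+|U|$, together with the identities $(\Delta\cdot\Omega)\setminus v=(\Delta\setminus v)\cdot\Omega$ and ${\rm link}_{\Delta\cdot\Omega}(v)={\rm link}_\Delta(v)\cdot\Omega$ and the facet description of a join, is the standard route and works cleanly. It is worth remarking that the cited Provan--Billera proposition is formulated for the pure notion of vertex decomposability; your argument, which explicitly transports the ``no shared facets'' shedding condition across the join in both directions, verifies that the result remains valid in the non-pure setting the present paper actually uses.
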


The property of vertex decomposability is preserved when taking a link.
The following result was first proved in  \cite[Proposition 2.3]{BP}
in the pure case;  the non-pure case follows similarly, as noted in
\cite[Theorem 3.30]{J} and \cite[Proposition 3.7]{W2}.

\begin{theorem} \label{linkvd}
If $\Delta$ is vertex decomposable,
then ${\rm link}_{\Delta}(\sigma)$ is vertex decomposable for any $\sigma \in \Delta$.
\end{theorem}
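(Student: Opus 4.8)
The plan is to prove the statement first for a single vertex, $\sigma = \{v\}$, and then to bootstrap to general $\sigma$ by induction on $|\sigma|$. The bootstrap is immediate from the identity $\operatorname{link}_\Delta(\sigma) = \operatorname{link}_{\operatorname{link}_\Delta(v)}(\tau)$ whenever $\sigma = \{v\} \cup \tau$ with $v \notin \tau$: if links at single vertices preserve vertex decomposability, then $\operatorname{link}_\Delta(v)$ is vertex decomposable, $\tau$ is a face of it of size $|\sigma| - 1$, and the inductive hypothesis applied to $\operatorname{link}_\Delta(v)$ finishes the job. (The cases $|\sigma| \le 1$ are the base, with $\operatorname{link}_\Delta(\emptyset) = \Delta$.)

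For the single-vertex case I would induct on the number of vertices of $\Delta$. If $\Delta$ is a simplex there is nothing to do, since $\operatorname{link}_\Delta(v)$ is then again a simplex. Otherwise fix a shedding vertex $w$ of $\Delta$. If $w = v$, then $\operatorname{link}_\Delta(v)$ is vertex decomposable directly from the definition of a shedding vertex, so assume $w \neq v$. The goal is to show that $w$ is a shedding vertex of $\operatorname{link}_\Delta(v)$ (unless that complex is already a simplex). Everything hinges on two commutation identities, both checked straight from the definitions of deletion and link:
\[
\operatorname{link}_\Delta(v) \setminus w = \operatorname{link}_{\Delta \setminus w}(v), \qquad\text{and}\qquad \operatorname{link}_{\operatorname{link}_\Delta(v)}(w) = \operatorname{link}_{\operatorname{link}_\Delta(w)}(v).
\]
Since $\Delta \setminus w$ is vertex decomposable with strictly fewer vertices and $v$ is still a vertex of it, the inductive hypothesis makes $\operatorname{link}_\Delta(v) \setminus w = \operatorname{link}_{\Delta\setminus w}(v)$ vertex decomposable. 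Likewise $\operatorname{link}_\Delta(w)$ is vertex decomposable with strictly fewer vertices, so when $\{v,w\} \in \Delta$ (the only case in which $w$ is genuinely a vertex of $\operatorname{link}_\Delta(v)$) the inductive hypothesis makes $\operatorname{link}_{\operatorname{link}_\Delta(v)}(w) = \operatorname{link}_{\operatorname{link}_\Delta(w)}(v)$ vertex decomposable; and when $\{v,w\} \notin \Delta$ one simply has $\operatorname{link}_\Delta(v) = \operatorname{link}_{\Delta\setminus w}(v)$, which is already vertex decomposable, so this degenerate case is finished.

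The remaining point — and the one I expect to be the crux — is to verify that $\operatorname{link}_\Delta(v)\setminus w$ and $\operatorname{link}_{\operatorname{link}_\Delta(v)}(w)$ share no facet. The idea is a lifting argument: a common facet $F$ of these two complexes would force $F \cup \{v,w\} \in \Delta$, so $G := F \cup \{v\}$ lies in both $\Delta \setminus w$ and $\operatorname{link}_\Delta(w)$; and the maximality of $F$ in each of the two link-complexes translates, via the same two commutation identities, into maximality of $G$ in $\Delta\setminus w$ and in $\operatorname{link}_\Delta(w)$ respectively. That contradicts $w$ being a shedding vertex of $\Delta$. After disposing of the trivial degenerate cases (e.g. $\operatorname{link}_\Delta(v)$ a simplex, or one of the two pieces being void), this shows $w$ is a shedding vertex of $\operatorname{link}_\Delta(v)$, completing the induction and hence the proof.
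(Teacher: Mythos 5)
Your argument is correct. Note, however, that the paper does not actually prove Theorem \ref{linkvd}: it only cites Provan--Billera for the pure case and Jonsson and Woodroofe for the non-pure case, so there is no in-paper proof to compare against. What you have written is a complete, self-contained version of the standard argument from those references: reduce to $\sigma=\{v\}$ via $\operatorname{link}_\Delta(\sigma)=\operatorname{link}_{\operatorname{link}_\Delta(v)}(\tau)$, then induct on the number of vertices using the commutation identities $\operatorname{link}_\Delta(v)\setminus w=\operatorname{link}_{\Delta\setminus w}(v)$ and $\operatorname{link}_{\operatorname{link}_\Delta(v)}(w)=\operatorname{link}_{\operatorname{link}_\Delta(w)}(v)$, with the facet-lifting step ($F\mapsto F\cup\{v\}$) showing that a shared facet downstairs would produce a shared facet of $\Delta\setminus w$ and $\operatorname{link}_\Delta(w)$, contradicting the shedding property of $w$. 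All the identities and the maximality transfer check out, and you handle the degenerate cases ($w=v$, $\{v,w\}\notin\Delta$, simplices) correctly. The only thing worth making fully explicit in a final write-up is the verification that $F\cup\{v\}$ is maximal in each of $\Delta\setminus w$ and $\operatorname{link}_\Delta(w)$, which you currently only sketch, but the sketch is sound.
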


An important notion for our main construction and results in
Section \ref{Colorings} is that of a coloring of a
simplicial complex.

\begin{definition} Let $\Delta$ be a simplicial complex on the vertex set $V$
 with facets $F_1, \dots, F_t$.  An {\it s-coloring} of $\Delta$  is a
partition of the vertices  $V = V_1\cup \dots \cup V_s$
(where the sets $V_i$ are allowed to be empty) such that  $|F_i \cap V_j| \leq 1$
for all
$1 \leq i \leq t, 1 \leq j \leq s$.
We will sometimes write that $\chi$ is an $s$-coloring of $\Delta$
to mean $\chi$ is a specific partition of $V$ that gives an $s$-coloring of $\Delta$.
If there exists an $s$-coloring of $\Delta$, we say that
$\Delta$ is \emph{s-colorable}.
%If $\Delta$ has dimension $d-1$, then we say that
%$\Delta$ is \emph{balanced} if it is $d$-colorable.
\end{definition}

Note that the definition of an $s$-coloring is equivalent to an $s$-coloring (in the graph theoretic sense) of the 1-skeleton of the simplicial complex.

\begin{example}  If $\Delta$ is a simplicial complex on $|V| = n$ vertices,
then $\Delta$ is $n$-colorable; indeed, we take our coloring
to be $V = \{x_1\} \cup \{x_2\} \cup \cdots \cup \{x_n\}$.
\end{example}

In this paper, we are interested in the case in which a subset of the
vertices of a simplicial complex is colored, or equivalently, in the coloring
of an induced subcomplex.

\begin{definition}
Let $\Delta$ be a simplicial complex on the vertex set
$V$, and let $W \subseteq V$.  The \emph{restriction of $\Delta$ to $W$}
(or equivalently, the \emph{induced subcomplex over $W$}) is the
subcomplex
\[
\Delta |_W = \{\sigma \in \Delta ~|~ \sigma \subseteq W\}.
\]
\end{definition}

\begin{definition}\label{partialcolor}
Let $\Delta$ be a simplicial complex on the vertex set $V$, and let
$W \subseteq V$.  If $\chi$ is an $s$-coloring of the restriction $\Delta|_W$,
then we call $\chi$ a \emph{partial coloring} of $\Delta$.  We call the vertices
in $W$ the \emph{colored vertices} of $\Delta$ and those in $\overline{W} = V\setminus W$ the
\emph{non-colored vertices}.
\end{definition}

\begin{example}\label{examplepartial}
Let $\Delta = \langle x_1x_2x_3, x_2x_4\rangle$, and let 
$W = \{x_1, x_2, x_4\}$.  Then $\Delta|_W = \langle x_1x_2,x_2x_4 \rangle$.
Then a $2$-coloring $\chi$ of $\Delta|_W$ is 
given by $W = \{x_1, x_4\} \cup \{x_2\}$.   So $\chi$ is a partial
coloring of $\Delta$, where the vertices of $W$ are the colored vertices,
and $\{x_3\}$ is a non-colored vertex.
\end{example}

%%%%%%%%%%%%%%%%%%%%%%%%%%%%%%%%%%%%%%%%%%%%%%%%%%%%%%%%%%%%%%%%%%%%%%%%%%

\section{Partial colorings and vertex decomposability}\label{Colorings}

Given any simplicial complex $\Delta$ with a partial coloring $\chi$,
we introduce a construction to make a new simplicial complex
$\Delta_\chi$.  Our main result gives necessary and
sufficient conditions for $\Delta_\chi$ to be
vertex decomposable. We begin with the construction of $\Delta_\chi$.

\begin{construction}\label{deltaChi}  Let $\Delta$ be a simplicial complex
on the vertex set $V = \{x_1, \dots, x_n\}$, and let $W$ be a subset of
$V$.  Let $\chi$ be an $s$-coloring of $\Delta|_W$ given by
$W= W_1 \cup \dots \cup W_s$. Define  $\Delta_\chi$ to be the simplicial complex on the vertex set
$\{x_1, \dots, x_n, y_1, \dots, y_s\}$ with
faces $\sigma \cup \tau$, where $\sigma$ is a face of $\Delta$, and $\tau$ is
any subset of $\{y_1,\dots, y_s\}$ such that for all
$y_j \in \tau$, we have $\sigma \cap W_j = \emptyset$.  Note in particular that since $\emptyset$ is a face of $\Delta$, $\{y_1,y_2,\dots, y_s\}$ is always a face of $\Delta_\chi$.
\end{construction}

The first and fourth authors recently studied Construction \ref{deltaChi} in
\cite{BVT} in the case that $W=V$;
this case also appears in \cite{F} and implicitly in a proof in \cite{BFS}.  We
call the process of adding a new vertex $y_i$ for a color class $W_i$
\emph{whiskering}, and call the resulting complex $\Delta_\chi$ the
\emph{(partially) whiskered} simplicial complex.

\begin{example}
Consider the simplicial complex
$\Delta = \langle x_1x_2x_3, x_2x_4\rangle$ of Example \ref{examplepartial}.
If $W = \{x_1, x_2, x_4\}$,  take $\chi$ to be the 
coloring of $\Delta|_W$ given by $W = \{x_1, x_4\} \cup \{x_2\}$.  
Then $\Delta_\chi = \langle x_3y_1y_2, x_4y_2, x_1x_3y_2, x_2x_3y_1, x_2x_4, x_1x_2x_3\rangle$.  $\Delta$ and $\Delta_\chi$ are shown in Figure \ref{Coloring example}.

\begin{figure}[h]
\begin{tikzpicture}[thick, scale = 2]
\filldraw[fill= black!10!white, xshift = -3.5cm]  (0,0) -- (225:1) -- (315:1) -- cycle;
\draw [xshift = -3.5cm] (315:1) -- (45:2);
\filldraw[white, xshift = -3.5cm] (45:2) circle (1.5pt);
\draw[black, xshift = -3.5cm] (45:2) circle (1.5pt) node[anchor=west] {$x_4$};
\filldraw[black!50!white, xshift = -3.5cm ] (315:1) circle (1.5pt);
\draw[black, xshift = -3.5cm] (315:1) circle (1.5pt) node[anchor=west] {$x_2$};
\filldraw[white, xshift = -3.5 cm] (225:1) circle (1.5pt);
\draw[black, xshift = -3.5cm] (225:1) circle (1.5pt) node[anchor=east] {$x_1$};
\filldraw[black, xshift = -3.5cm] (0,0) circle (1.5pt) node[anchor=west] {$x_3$};

\filldraw[fill= black!10!white]  (45:1) -- (135:1) -- (225:1) -- (315:1) -- cycle;
\draw (45:1) -- (0,0) -- (225:1);
\draw(135:1) -- (0,0) -- (315:1);
\draw (315:1) -- (45:2) -- (135:1);
\filldraw[black] (0,0) circle (1.5pt) node[anchor=west] {$x_3$};
\filldraw[white] (45:1) circle (1.5pt);
\draw[black] (45:1) circle (1.5pt) node[anchor=west] {$y_1$};
\filldraw[black!50!white] (135:1) circle (1.5pt);
\draw[black] (135:1) circle (1.5pt) node[anchor=east, black] {$y_2$};
\filldraw[white] (225:1) circle (1.5pt);
\draw[black] (225:1) circle (1.5pt) node[anchor=east] {$x_1$};
\filldraw[black!50!white] (315:1) circle (1.5pt);
\draw[black] (315:1) circle (1.5pt) node[anchor=west] {$x_2$};
\filldraw[white] (45:2) circle (1.5pt);
\draw[black] (45:2) circle (1.5pt) node[anchor=west] {$x_4$};
\end{tikzpicture}
\caption{ \label{Coloring example} The simplicial complexes $\Delta$ (left) and $\Delta_\chi$ (right).}
\end{figure}
\end{example}

\begin{remark}
To forestall any potential confusion,  ``whiskering a vertex,'' as first
defined in \cite{V},
referred to adding a new additional vertex to a graph and joining the new and
old vertices by an edge.
This operation was defined in terms of the finite simple graph.  However, this
procedure also results in a change
in the independence complex of the graph (see Section 4 for details).  In our definition,
when we use the term ``whiskering'', we are generalizing the operation that
changes the independence complex,
not the graph.
\end{remark}

Our main result is necessary and sufficient conditions for $\Delta_{\chi}$ to be
vertex decomposable.

\begin{theorem}\label{vertexDecomposable}  Let $\Delta$ be a simplicial
complex on the vertex set $V$, and let $W$ be a subset of
$V$.  Let $\chi$ be the $s$-coloring of $\Delta|_W$ given by
$W= W_1 \cup \dots \cup W_s$.
Then $\Delta_\chi$ is vertex decomposable if and only if
${\rm link}_\Delta(\mu)|_{\overline{W}}$ is vertex
decomposable for every face $\mu$ of $\Delta$ such that  $\mu \subseteq W$.
%Suppose that $\Delta$ satisfies the
%following two properties:
%\begin{enumerate}
%\item $\Delta |_{\overline{W}}$ is vertex decomposable, and
%\item ${\rm link}_\Delta(\mu)|_{\overline{W}}$ is vertex
%decomposable for every face $\mu$ of $\Delta$ such that  $\mu \subseteq W$.
%\end{enumerate}
%Then $\Delta_\chi$ is vertex decomposable.
\end{theorem}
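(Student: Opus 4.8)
The plan is to argue by induction on $|W|$, peeling off one whiskered vertex at a time. The base case $W = \emptyset$ is immediate: here $s = 0$, so $\Delta_\chi = \Delta = \Delta|_{\overline W}$, and the right-hand condition (applied to $\mu = \emptyset$) just says $\Delta$ is vertex decomposable. For the inductive step the key observation is that $\Delta_\chi$ decomposes nicely along the whisker vertex $y_s$ attached to the color class $W_s$. By Construction~\ref{deltaChi}, a face of $\Delta_\chi$ containing $y_s$ is of the form $\sigma \cup \tau \cup \{y_s\}$ with $\sigma \cap W_s = \emptyset$, so
\[
\operatorname{link}_{\Delta_\chi}(y_s) = (\Delta \setminus W_s)_{\chi'} \cdot \langle \emptyset\rangle,
\]
where $\chi'$ is the induced $(s-1)$-coloring on $\Delta\setminus W_s$ restricted to $W' = W \setminus W_s$ (and the trivial join factor just records that $y_s$ was removed); more precisely $\operatorname{link}_{\Delta_\chi}(y_s)$ is the $\chi'$-whiskering of $\Delta|_{\overline W \cup (W\setminus W_s)}$ in the appropriate sense. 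Meanwhile $\Delta_\chi \setminus y_s$ is the $\chi'$-whiskering of $\Delta$ with $W_s$ now treated as non-colored, i.e. with colored set $W'$. Both of these are whiskerings on strictly smaller colored vertex sets, so the inductive hypothesis applies to each.

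Granting this, I would first prove the forward direction: if $\Delta_\chi$ is vertex decomposable, then so is $\operatorname{link}_{\Delta_\chi}(\mu)|_{\overline W}$ for every face $\mu \subseteq W$. Here I use Theorem~\ref{linkvd} to pass to links inside $\Delta_\chi$, then observe that for $\mu \subseteq W$ one has $\operatorname{link}_{\Delta_\chi}(\mu)|_{\overline W} = \operatorname{link}_\Delta(\mu)|_{\overline W}$ because none of the whisker vertices $y_j$ with $W_j \cap \mu \ne \emptyset$ survive, and the remaining $y_j$'s lie outside $\overline W$; so restricting to $\overline W$ kills all the $y$'s and we are left with a link in $\Delta$ intersected with $\overline W$. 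For the reverse direction, assume $\operatorname{link}_\Delta(\mu)|_{\overline W}$ is vertex decomposable for all faces $\mu \subseteq W$. I claim $y_s$ is a shedding vertex of $\Delta_\chi$. One must check: (a) $\Delta_\chi \setminus y_s$ is vertex decomposable — by induction, since its colored set is $W'$, we need $\operatorname{link}_\Delta(\mu)|_{\overline W \cup W_s}$ vertex decomposable for all faces $\mu \subseteq W'$, which should follow from the hypothesis by a further short argument (restricting the given complexes, using that $W_s$ has at most one vertex in each facet of the relevant link, or by Theorem~\ref{thm.BP}-style splitting); (b) $\operatorname{link}_{\Delta_\chi}(y_s)$ is vertex decomposable — again by induction applied to the whiskering of $\Delta\setminus W_s$ over $W'$, whose hypothesis translates to vertex decomposability of $\operatorname{link}_{\Delta\setminus W_s}(\mu)|_{\overline W}$ for faces $\mu\subseteq W'$, another instance of the hypothesis since links and deletions interact cleanly here; and (c) the two pieces share no facet, which holds because every facet of $\operatorname{link}_{\Delta_\chi}(y_s)$, when we add $y_s$ back, gives a facet of $\Delta_\chi$ containing $y_s$, whereas no facet of $\Delta_\chi\setminus y_s$ contains $y_s$ — one must verify these "whiskered" facets are genuinely maximal, which uses that $y_s$ can be added to any face of $\Delta$ avoiding $W_s$.

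The main obstacle I anticipate is bookkeeping in the inductive step: making the identifications of $\Delta_\chi \setminus y_s$ and $\operatorname{link}_{\Delta_\chi}(y_s)$ with whiskerings over smaller colored sets completely precise, and then checking that the right-hand condition in the theorem for the big complex really does imply the corresponding conditions for the two smaller whiskerings. The subtle point is that moving $W_s$ from "colored" to "non-colored" changes which faces $\mu$ we quantify over and changes $\overline W$ to $\overline W \cup W_s$; I expect to need a lemma saying that if $\operatorname{link}_\Delta(\mu)|_{\overline W}$ is vertex decomposable for all $\mu \subseteq W$, then $\operatorname{link}_\Delta(\mu)|_{\overline W \cup W_s}$ is vertex decomposable for all $\mu \subseteq W \setminus W_s$ — intuitively true because adding vertices back into the restriction only "cones off" or adds independent directions controlled by the missing color class, but it will take care to nail down, likely via Theorem~\ref{thm.BP} or a direct shedding-vertex argument on the vertices of $W_s$. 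The shared-facet condition (c), while conceptually easy, also deserves a careful check since vertex decomposability is sensitive to it.
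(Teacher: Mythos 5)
There is a genuine gap, and it is fatal to the reverse direction as proposed: the whisker vertex $y_s$ is in general \emph{not} a shedding vertex of $\Delta_\chi$, because $\Delta_\chi \setminus y_s$ need not be vertex decomposable even when the link hypothesis holds. Your step (a) requires the auxiliary lemma that if ${\rm link}_\Delta(\mu)|_{\overline{W}}$ is vertex decomposable for all faces $\mu \subseteq W$, then ${\rm link}_\Delta(\mu)|_{\overline{W}\cup W_s}$ is vertex decomposable for all faces $\mu \subseteq W\setminus W_s$; this lemma is false. Take $\Delta = \langle \{a,b\},\{c,d\}\rangle$ with $s=1$ and $W=W_1=\{d\}$. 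Then $\Delta|_{\overline{W}} = \langle \{a,b\},\{c\}\rangle$ and ${\rm link}_\Delta(d)|_{\overline{W}} = \langle\{c\}\rangle$ are both vertex decomposable, so the theorem's hypothesis holds (and indeed $\Delta_\chi$ is vertex decomposable, with $d$ as a shedding vertex). But $\Delta_\chi\setminus y_1 = \Delta$ is two disjoint edges, which is not vertex decomposable; with $\mu=\emptyset$ this is exactly the failure of your lemma. Intuitively, deleting $y_s$ "un-whiskers" the color class $W_s$, and un-whiskering destroys vertex decomposability in general --- that is the whole phenomenon the theorem is about. The paper avoids this entirely by shedding a \emph{colored original vertex} $w\in W_1\subseteq W$ rather than a whisker vertex: then both $\Delta_\chi\setminus w = (\Delta\setminus w)_{\chi'}$ and ${\rm link}_{\Delta_\chi}(w) = ({\rm link}_\Delta(w))_{\chi''}$ are whiskerings of smaller complexes with the \emph{same} uncolored set $\overline{W}$, so the inductive hypothesis (induction on the number of vertices of $\Delta$) applies verbatim, and the shared-facet condition is checked using $y_1$.

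A secondary gap occurs in your forward direction: you pass to ${\rm link}_{\Delta_\chi}(\mu)$ via Theorem~\ref{linkvd} and then "restrict to $\overline{W}$," but restriction to an induced vertex subset does not preserve vertex decomposability, so that last step is unjustified as stated. The conclusion is nonetheless correct because ${\rm link}_\Delta(\mu)|_{\overline{W}}$ coincides with the link in $\Delta_\chi$ of the \emph{larger} face $\mu\cup\{y_{t+1},\dots,y_s\}$, where $y_{t+1},\dots,y_s$ are the whisker vertices of the color classes missed by $\mu$; the paper proves this identification and then applies Theorem~\ref{linkvd} once, with no restriction step. You would need to replace "restrict" by this identification for the forward direction to be complete.
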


\begin{remark}\label{onecondition}
The special case $\mu=\emptyset$ in Theorem~\ref{vertexDecomposable}
is instructive. Because ${\rm link}_\Delta(\emptyset) = \Delta$, the
link hypothesis imposes the condition that $\Delta|_{\overline{W}}$
is vertex decomposable.
%This condition is necessary; see Theorem~\ref{thm.necessary}.
\end{remark}

\begin{proof}[Proof of Theorem~\ref{vertexDecomposable}]
$(\Leftarrow)$
We proceed by induction on the number of vertices of $\Delta$.  The base case
is the empty simplicial complex $\Delta = \{\emptyset\}$.  In this case the
only partial coloring of $\Delta$ is $W = W_1\cup \dots\cup W_s$, where all the
$W_i$ are empty.  Then $\Delta_\chi$ is the simplex
$\langle \{y_1,\dots, y_s\}\rangle$,
which is vertex decomposable.

Now let $\Delta$ be a simplicial complex on vertex set
$V = \{x_1, \dots, x_n\}$, and let $W \subseteq V$.
If $W = \emptyset$,
then $\Delta_{\chi} = \Delta \cdot \langle \{y_1,\ldots, y_s\} \rangle$,
the join of two simplicial complexes. The link hypothesis with $\mu=\emptyset$
implies that $\Delta|_{\overline{\emptyset}} = \Delta$ is vertex decomposable.
It then follows from Theorem~\ref{thm.BP} that
$\Delta_{\chi}$ is vertex decomposable.
We can therefore assume that $\Delta \neq \{\emptyset\}$
and $W \neq \emptyset$.

Let $w \in W$.   After relabelling, we will assume that $w \in W_1$.
To prove that $\Delta_\chi$ is vertex decomposable, we will
show that $\Delta_\chi \setminus w$ and ${\rm link}_{\Delta_\chi} (w)$
are both vertex decomposable.

Recall that the faces of $\Delta_\chi$ are
of the form $\sigma \cup \tau$, where $\sigma$ is a face of $\Delta$ and
$\tau \subseteq \{y_1, \dots y_s\}$ such that if $y_j \in \tau$, then
$W_j \cap \sigma = \emptyset$.  Note that
\begin{align*}
\Delta_\chi \setminus w
&= \{\sigma \cup \tau \in \Delta_\chi ~|~ w \notin \sigma \cup \tau\}\\
	&= \{\sigma \cup \tau \in \Delta_\chi ~|~ w \notin \sigma\}\\
	&= (\Delta \setminus w)_{\chi'}
\end{align*}
where $\chi'$ is the partial coloring $(W_1 \setminus \{w\}) \cup W_2
\cup \cdots \cup W_s$
of $\Delta \setminus w$ induced by the coloring $\chi$ of $\Delta$.
Since $w \in W$, the uncolored vertices of $\Delta$ and $\Delta \setminus w$ are
the same set $\overline{W}$.
%Thus, we have
%\[
%(\Delta \setminus w)|_{\overline{W}} = \Delta |_{\overline{W}}
%\]
%which is vertex decomposable.
Now let $\mu$ be a face of  $\Delta \setminus w$ such that
$\mu \subseteq (W \setminus \{w\})$.  Then since $w \notin \overline{W}$,
\begin{align*}
({\rm link}_{\Delta \setminus w}(\mu))|_{\overline{W}}
&= \{ \sigma \in (\Delta \setminus w) ~|~ \mu \cap \sigma = \emptyset,
\mu \cup \sigma \in (\Delta \setminus w)\}|_{\overline{W}}\\
	&= \{\sigma \in \Delta ~|~ w \notin \sigma, \mu \cap \sigma = \emptyset,
\mu \cup \sigma \in (\Delta \setminus w)\}|_{\overline{W}}\\
	&= \{\sigma \in \Delta ~|~ \mu \cap \sigma = \emptyset, \mu \cup \sigma
\in \Delta\}|_{\overline{W}}\\
	&= ({\rm link}_\Delta (\mu))|_{\overline{W}}.
\end{align*}
Thus we have that $({\rm link}_{\Delta \setminus w}(\mu))|_{\overline{W}}$
is vertex decomposable by hypothesis.
Therefore, since $\Delta \setminus w$ is a simplicial complex on fewer
than $n$ vertices with $\chi'$ a partial coloring on $W\setminus \{w\}$
such that $({\rm link}_{\Delta \setminus w}(\mu))|_{\overline{W}}$ is vertex
decomposable for all $\mu \subseteq (W\setminus \{w\})$,  induction implies that
$\Delta_\chi \setminus w = (\Delta \setminus w)_{\chi'}$ is vertex decomposable.

Now consider ${\rm link}_{\Delta_\chi}(w)$.  We have
\begin{align*}
{\rm link}_{\Delta_\chi}(w)
&= \{ \sigma \cup \tau \in \Delta_\chi ~|~
w \notin \sigma \cup \tau, \sigma \cup \tau \cup \{w\} \in \Delta_\chi\}\\
	&= \{\sigma \cup \tau \in \Delta_\chi ~|~w \notin \sigma, (\sigma
\cup \{w\})\cup \tau \in \Delta_\chi\}\\
	&= ({\rm link}_\Delta (w))_{\chi''},
\end{align*}
where $\chi''$ is the partial coloring of ${\rm link}_\Delta(w)$
given by $U_2 \cup \dots \cup U_s$, and
$U_j = W_j \cap \{$vertices of ${\rm link}_\Delta(w)\}$.  Because
$w \in W_1$, we need only consider $U_j$ for $j=2,\ldots,s$.
Set $U = U_2 \cup \dots \cup U_s$ (i.e., the colored vertices of ${\rm
link}_\Delta(w)$) and $\overline{U}$ to be the set of non-colored vertices of
${\rm link}_\Delta(w)$.

Note that
$\overline{U} = \overline{W} \cap \{$vertices of ${\rm link}_\Delta(w)\}$.
%
%so that we have
%\[
%({\rm link}_\Delta(w))|_{\overline{U}} = ({\rm link}_\Delta(w))|_{\overline{W}}
%\]
%which is vertex decomposable by the assumptions on $\Delta$.
Let $\mu$ be a face of ${\rm link}_\Delta(w)$ such that
$\mu \subseteq U$.  Since $\mu \in {\rm link}_\Delta(w)$, we have
$\mu \cup \{w\} \in \Delta$.  Then
\[
{\rm link}_{{\rm link}_\Delta(w)}(\mu) = {\rm link}_\Delta(\mu \cup \{w\})
\]
so
\[
({\rm link}_{{\rm link}_\Delta(w)}(\mu))|_{\overline{U}} = ({\rm link}_{{\rm link}_\Delta(w)}(\mu))|_{\overline{W}} = ({\rm link}_\Delta(\mu \cup
\{w\}))|_{\overline{W}}.
\]
By the assumption on $\Delta$, $ ({\rm link}_\Delta(\mu \cup
\{w\}))|_{\overline{W}}$ is vertex decomposable.
Because ${\rm link}_{\Delta_\chi}(w) = ({\rm link}_\Delta (w))_{\chi''}$, and ${\rm
link}_\Delta (w)$ is a simplicial complex on fewer than $n$ vertices,
${\rm
link}_{\Delta_\chi}(w)$ is vertex decomposable by induction.

To show that $\Delta_\chi$ is vertex decomposable, all that remains is to
show that no facet of ${\rm link}_{\Delta_\chi}(w)$ is a facet of
$\Delta_\chi \setminus w$.  Let $\sigma \cup \tau$ be a facet of
${\rm link}_{\Delta_\chi}(w)$ .  Then $(\sigma \cup \{w\}) \cup \tau$ is a
face of $\Delta_\chi$, so $y_1 \notin \tau$.    On the other hand, if
$\sigma \cup \tau \in \Delta_\chi \setminus w$, then since
$\sigma \cap W_1 = \emptyset$, $\sigma \cup \tau \cup \{y_1\}$ is also a face
of $\Delta_\chi \setminus w$ and so the
link and deletion do not share any facets.

$(\Rightarrow)$  Let $\mu \in \Delta$ be a face such that $\mu \subseteq W$, and hence
$\mu \in \Delta|_W$.  Because
$\chi$ is an $s$-coloring of $\Delta|_W$, we have
$|\mu \cap W_i| \leq 1$ for $i=1,\ldots,s$.
After relabelling the $W_j$'s, we may assume that
$|\mu \cap W_i| = 1$ for $i=1,\ldots,t$,
and $|\mu \cap W_i| = 0$ for $i=t+1,\ldots,s$.

By Construction \ref{deltaChi}, we have $\mu \cup \{y_{t+1},\ldots,y_s\} \in \Delta_{\chi}$.
We now claim that
\[{\rm link}_{\Delta}(\mu)|_{\overline{W}} = {\rm link}_{\Delta_{\chi}}(\mu \cup \{y_{t+1},\ldots,y_s\}).\]
Notice that our conclusion will then follow from this claim and
Theorem \ref{linkvd} because $\Delta_\chi$ is
assumed to be vertex decomposable.

For any $\tau \in {\rm link}_{\Delta}(\mu)|_{\overline{W}}$,
we have $\tau \cup \mu \in \Delta$, $\tau \cap \mu = \emptyset$, and $\tau \cap W = \emptyset$.
By Construction \ref{deltaChi}, $\tau \cup \mu \cup \{y_{t+1},\ldots,y_s\} \in \Delta_{\chi}$
because $(\tau \cup \mu) \cap W_i = \emptyset$ for $i=t+1,\ldots,s$.  Moreover, since
$\tau \cap (\mu \cup \{y_{t+1},\ldots,y_s\}) = \emptyset$, we have $\tau \in {\rm link}_{\Delta_{\chi}}(\mu \cup \{y_{t+1},\ldots,y_s\})$.

We now consider the reverse inclusion.  Let $\tau \in
{\rm link}_{\Delta_{\chi}}(\mu \cup \{y_{t+1},\ldots,y_s\})$.  Thus, $\tau \cup \mu \cup \{y_{t+1},\ldots,y_s\}
\in \Delta_{\chi}$ and $\tau \cap (\mu \cup \{y_{t+1},\ldots,y_s\}) = \emptyset$.  Since $|\mu \cap W_i| = 1$ for $ i = 1, \dots, t$ we know that $y_i \notin \tau$ for $i = 1, \dots, t$.  Therefore $\tau \subseteq \{x_1, \dots, x_n\}$, and $\tau \cup \mu$ must be a face of $\Delta$.  Thus $\tau \in {\rm link}_\Delta(\mu)$.

Further, since $\tau \cup \mu \in \Delta$ and $|\mu \cap W_i| = 1$ for $i =1, \dots, t$, we have $|\tau \cap W_i| = 0$ for $i = 1, \dots, t$.  Since $\tau \cup \mu \cup \{y_{t+1}, \dots, y_s\} \in \Delta_\chi$ we have $|\tau \cap W_i| = 0$ for $i = t +1, \dots, s$ as well.  Therefore $\tau \in {\rm link}_\Delta(\mu)|_{\overline{W}}$.
\end{proof}

\begin{remark}\label{verticesnotenough}
%It is not enough to assume in Theorem~\ref{vertexDecomposable} that ${\rm link}_\Delta(v)|_{\overline{W}}$ is vertex
%decomposable for every vertex $v$ of $\Delta$ such that $v \in W$. For example,
Let $\Delta = \langle x_1x_2x_3x_4, x_1x_3x_4x_5, x_1x_3x_5x_6, x_1x_2x_5x_6,
x_2x_3x_6\rangle$, and let $\chi$ be the coloring given by $W = \{x_1\} \cup \{x_2\}$.
Then $\Delta|_{\overline{W}}$, ${\rm link}_\Delta(x_1)|_{\overline{W}}$, and
${\rm link}_\Delta(x_2)|_{\overline{W}}$ are all vertex decomposable. However,
${\rm link}_\Delta(\{x_1,x_2\})|_{\overline{W}}$ is not vertex decomposable, so by
Theorem ~\ref{vertexDecomposable}, neither is $\Delta_{\chi}$.
\end{remark}

%We now explore the consequences of Theorem~\ref{vertexDecomposable}.
%When $W = V$, then
%Theorem \ref{vertexDecomposable} recovers
%\cite[Theorem 3.7]{BVT} as an immediate corollary.

%\begin{corollary}\label{V=W}
%Let $\Delta$ be a simplicial complex on the vertex set $V$
%and let $\chi$ be an $s$-coloring of $\Delta$.  Then $\Delta_\chi$ is vertex
%decomposable.
%\end{corollary}

%\begin{proof}
%Let $W = V = V_1 \cup \cdots \cup V_s$ be an $s$-coloring of $\Delta|_W$.
%But then ${\rm link}_\Delta(\mu)|_{\overline{W}} = \{\emptyset\}$ for every face
%$\mu \in \Delta$ such that  $\mu \subseteq W$.
%Because $\{\emptyset\}$ is vertex decomposable, Theorem \ref{vertexDecomposable}
%gives the desired conclusion.
%\end{proof}

We now give a bound on the number of vertices to color to ensure
that $\Delta_\chi$ is vertex decomposable.   The following corollary also recovers \cite[Theorem 3.7]{BVT} in the case where $|V \setminus W| = 0$.

\begin{corollary}\label{numericalBound}
Let $\Delta$ be a simplicial complex on vertex set $V$, $W$ a
subset of $V$, and $\chi$ a coloring of $\Delta|_W$.  If $|V\setminus W|  \leq 3$,
then $\Delta_\chi$ is vertex decomposable.
\end{corollary}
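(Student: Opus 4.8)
The plan is to apply Theorem~\ref{vertexDecomposable} and reduce the claim to the statement that every induced subcomplex on at most $3$ vertices is vertex decomposable. By Theorem~\ref{vertexDecomposable}, it suffices to show that ${\rm link}_\Delta(\mu)|_{\overline{W}}$ is vertex decomposable for every face $\mu$ of $\Delta$ with $\mu \subseteq W$. Fix such a $\mu$. The complex $\Gamma := {\rm link}_\Delta(\mu)|_{\overline{W}}$ is a simplicial complex whose vertex set is contained in $\overline{W} = V \setminus W$, and by hypothesis $|\overline{W}| \leq 3$. So the entire corollary comes down to the following elementary fact: \emph{any simplicial complex on at most $3$ vertices is vertex decomposable.}

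To prove that fact I would induct on the number of vertices (at most $3$). If the complex has $0$ or $1$ vertices, it is a simplex, hence vertex decomposable. For the inductive step, let $\Gamma$ be a complex on a vertex set of size $k \in \{2,3\}$ that is not already a simplex, and pick any vertex $v$. Both $\Gamma \setminus v$ and ${\rm link}_\Gamma(v)$ live on at most $k-1 \leq 2$ vertices, so each is vertex decomposable by induction. The only thing to check is the non-shared-facet (shedding) condition: no facet of ${\rm link}_\Gamma(v)$ is a facet of $\Gamma \setminus v$. If $F$ were such a common facet, then $F$ would be a facet of $\Gamma \setminus v$, so $F \in \Gamma$ and $F \cup \{v\} \notin \Gamma$; but $F$ a facet of ${\rm link}_\Gamma(v)$ means $F \cup \{v\} \in \Gamma$ with $F$ maximal among faces containing $v$. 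Thus $F \cup \{v\} \in \Gamma$ and $F \cup \{v\} \notin \Gamma$, a contradiction — wait, more carefully: the real content is choosing $v$ so that the shedding condition holds. I would instead argue: since $\Gamma$ is not a simplex on its $k$ vertices, some pair of vertices is a non-face; pick $v$ to be a vertex lying in some minimal non-face. Then $v$ is not in every facet of $\Gamma$, so $\Gamma \setminus v$ retains a facet not containing $v$; and since $v$ lies in a minimal non-face $N$ of size $\leq 3$, the set $N \setminus \{v\}$ is a face that cannot be enlarged by $v$, which forces every facet of ${\rm link}_\Gamma(v)$ to be strictly smaller than some facet of $\Gamma$ not containing $v$ — giving the disjointness of facets. (For $k \leq 3$ one can also simply enumerate the finitely many complexes and exhibit a shedding vertex in each; this is the cleanest route if the general shedding argument gets fiddly.)

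The main obstacle is purely bookkeeping: verifying the shedding condition (that ${\rm link}$ and deletion share no facet) when choosing the shedding vertex, since a poorly chosen vertex can fail it even on $3$ vertices (e.g. $\Gamma = \langle\{x_1,x_2\},\{x_3\}\rangle$ with $v = x_3$: the link is $\{\emptyset\}$ and $\emptyset$ is not a facet of the deletion, so this one is fine, but $v=x_1$ gives link $\langle\{x_2\}\rangle$ and deletion $\langle\{x_3\}\rangle$... still fine). In fact on $\leq 3$ vertices a valid shedding vertex always exists, and since there are only a handful of complexes to consider, a short case check suffices. I would therefore present the proof as: invoke Theorem~\ref{vertexDecomposable}, observe $\Gamma$ has $\leq 3$ vertices, and cite the (standard, easily verified) fact that complexes on at most three vertices are vertex decomposable — with a one-line indication of the case check.
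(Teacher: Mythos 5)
Your proposal is correct and follows essentially the same route as the paper: invoke Theorem~\ref{vertexDecomposable} and reduce to the fact that every simplicial complex on at most three vertices is vertex decomposable, which the paper simply asserts without proof. Your additional verification of that fact (via a case check, since the general minimal-non-face argument is left somewhat informal) is sound but not needed beyond a one-line remark.
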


\begin{proof}
All simplicial complexes on three or fewer vertices are vertex decomposable.
Since $|\overline{W}| = |V \setminus W| \leq 3$,
${\rm link}_\Delta(\mu)|_{\overline{W}}$ is vertex decomposable for any
$\mu \in \Delta$ such that $\mu \subseteq W$.  Thus, by
Theorem \ref{vertexDecomposable}, $\Delta_\chi$ is vertex decomposable.
\end{proof}

The previous corollary is an analog of a bound of the second and third
authors \cite[Corollary 3.5]{FH}.
The numerical bound on the cardinality of $\overline{W}$ in
Corollary \ref{numericalBound} is sharp:

\begin{example}\label{e.numericalBound}
Let $\Delta = \langle x_1x_2x_3, x_3x_4x_5\rangle$, $W = \{x_3\}$ and
$\chi$ be the coloring of $\Delta|_W$ given by $W = W_1 = \{x_3\}$,
so $|\overline{W}| = 4$.  Then
${\rm link}_{\Delta}(\emptyset)=\Delta|_{\overline{W}}  = \langle x_1x_2, x_4x_5\rangle$, which is not vertex
decomposable. Thus $\Delta$ with this coloring does not fit the conditions of
Theorem \ref{vertexDecomposable}.  Indeed,
$\Delta_\chi = \langle x_4x_5y_1, x_1x_2y_1, x_3x_4x_5, x_1x_2x_3\rangle$ is
not vertex decomposable.
\end{example}

As noted in \cite{FH}, it is not necessarily the number
of whiskers but rather their configuration that determines whether the resulting ideal is sequentially Cohen-Macaulay. A similar phenomenon occurs in our
setting. In particular, given any coloring of $\Delta$,
if $\chi$ is its restriction to all but one color class, then $\Delta_\chi$ is
vertex decomposable.

\begin{corollary}
Let $\Delta$ be a simplicial complex on the vertex set $V = \{x_1,\ldots,x_n\}$,
and let $\chi$ be an $s$-coloring of $\Delta$ given by
$V = V_1 \cup \cdots \cup V_s$.  For each $i =1,\ldots,s$, let
$\chi_i$ be the induced partial coloring of $\Delta|_{Y_i}$
given by $Y_i = V_1 \cup \cdots V_{i-1} \cup
V_{i+1} \cup \cdots V_s$.  Then $\Delta_{\chi_i}$ is vertex
decomposable for each $i=1,\ldots,s$.
\end{corollary}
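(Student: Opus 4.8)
The plan is to deduce this corollary directly from Theorem~\ref{vertexDecomposable} by showing that the link hypothesis is automatically satisfied when the uncolored set $\overline{Y_i}$ is a single color class $V_i$. Fix $i$, write $W = Y_i$ so that $\overline{W} = V_i$, and let $\mu$ be any face of $\Delta$ with $\mu \subseteq W$. According to Theorem~\ref{vertexDecomposable}, it suffices to prove that ${\rm link}_\Delta(\mu)|_{V_i}$ is vertex decomposable. The key observation is that, since $\chi$ is an $s$-coloring of all of $\Delta$, every facet $F$ of $\Delta$ satisfies $|F \cap V_i| \leq 1$; consequently every face of $\Delta$, and in particular every face of ${\rm link}_\Delta(\mu)$, meets $V_i$ in at most one vertex. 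Restricting to $V_i$ therefore leaves only faces that are either empty or singletons.

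Concretely, I would argue that ${\rm link}_\Delta(\mu)|_{V_i}$ is a simplicial complex all of whose faces have dimension at most $0$ — that is, a disjoint union of points (together with the empty face), possibly empty. Any such complex is a subset of the $0$-skeleton of a simplex, and it is vertex decomposable: if it has no vertices it is the empty complex (vertex decomposable by convention, or a simplex when it equals $\{\emptyset\}$), and if it has at least one vertex $v$, then ${\rm link}(v)$ is the simplex $\{\emptyset\}$ and $\Delta \setminus v$ is the same kind of $0$-dimensional complex on fewer vertices, which is vertex decomposable by induction on the number of vertices, and the two share no facets because the facet $\{v\}$ of the link is not a facet of the deletion. (Alternatively one may simply invoke that all complexes on three or fewer vertices — indeed here on any number of vertices but with this special $0$-dimensional structure — are vertex decomposable, but the inductive one-line argument is cleanest and avoids a cardinality restriction.)

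Having established that ${\rm link}_\Delta(\mu)|_{V_i}$ is vertex decomposable for every face $\mu \subseteq W = Y_i$, the corollary follows immediately by applying the $(\Leftarrow)$ direction of Theorem~\ref{vertexDecomposable} to the partial coloring $\chi_i$ of $\Delta|_{Y_i}$. Since $i$ was arbitrary, $\Delta_{\chi_i}$ is vertex decomposable for every $i = 1, \dots, s$.

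The main (and really the only) obstacle is the small verification that a $0$-dimensional complex is vertex decomposable; this is routine but should be stated carefully, since the definition of vertex decomposability requires both the deletion and link to be vertex decomposable \emph{and} to share no facets, and one must handle the degenerate cases (no vertices, one vertex) explicitly. Everything else is a direct translation: recognizing that $\overline{Y_i} = V_i$ and that the coloring condition on $\Delta$ forces $|\sigma \cap V_i| \leq 1$ for all $\sigma \in \Delta$. No new combinatorial input beyond Theorem~\ref{vertexDecomposable} is needed.
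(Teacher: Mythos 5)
Your proposal is correct and follows essentially the same route as the paper: identify $\overline{Y_i}=V_i$, note that the $s$-coloring of $\Delta$ forces every face of ${\rm link}_\Delta(\mu)$ to meet $V_i$ in at most one vertex, conclude the restriction is at most $0$-dimensional and hence vertex decomposable, and apply Theorem~\ref{vertexDecomposable}. Your explicit inductive verification that a disjoint union of points is vertex decomposable is in fact slightly more careful than the paper's one-line assertion at that step.
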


\begin{proof}
It suffices to prove the statement for $i=1$.   Let $Y_1 = V_2 \cup \cdots \cup
V_s$ be the induced partial coloring of $\Delta$ given by $\chi_1$.  Then
$\overline{Y_1} = V_1$.  Since $\chi$ is a coloring, if $\sigma \in \Delta$
and $\sigma \subseteq V_1$, then $|\sigma| \leq 1$.  Then for any $\mu \subseteq Y$,
${\rm link}_{\Delta}(\mu)|_{V_1}$ is either the simplicial complex $\{\emptyset\}$
or a zero-dimensional simplex. Because these simplicial complexes are
vertex decomposable, Theorem \ref{vertexDecomposable}
implies the desired result.
\end{proof}

%%%%%%%%%%%%%%%%%%%%%%%%%%%%%%%%%%%%%%%%%%%%%%%%%%%%%%%%%%%%%%%%%%%%%

\section{Sequentially Cohen-Macaulay edge ideals}

We round out this paper by applying Theorem \ref{vertexDecomposable} to
edge ideals of graphs.  In particular, we give a new
proof of \cite[Theorem 3.3]{FH} and classify when a whiskered bipartite graph
is sequentially Cohen-Macaulay.

We recall some terminology.
Let $G = (V,E)$ be a finite simple graph. We say $W \subseteq V$ is an
{\it independent set} if for all $e \in E$, $e \cap W \neq e$.  We can
form a simplicial complex from the independent sets of $G$:

\begin{definition} Let $G$ be a finite simple graph.  The {\it independence complex}
of $G$, denoted ${\rm Ind}(G)$, is the simplicial complex
${\rm Ind}(G) = \{W ~|~ \mbox{$W$ is an independent set of $G$}\}.$
\end{definition}

If $V = \{x_1,\ldots,x_n\}$, we can identify the vertices of $G$ with the variables
of $R = k[x_1,\ldots,x_n]$.  The {\it edge ideal} of $G$, denoted $I(G)$, is
the quadratic squarefree monomial ideal generated by the monomials $x_ix_j$ whenever
$\{x_i,x_j\} \in E$.  The edge ideal of $G$ is the Stanley-Reisner ideal
of ${\rm Ind}(G)$, i.e, $I(G) = I_{{\rm Ind}(G)}$.

We recall the definition of sequentially Cohen-Macaulay modules as it pertains to $I(G)$.

\begin{definition}  The graph $G$ is {\it sequentially Cohen-Macaulay} if $R/I(G)$
is sequentially Cohen-Macaulay; that is, there exists a finite filtration of
graded $R$ modules $0=M_0 \subset M_1 \subset \cdots \subset M_t=R/I(G)$ such
that $M_i/M_{i-1}$ is Cohen-Macaulay for each $i \in \{1,\ldots,t\}$, and for all $i \in \{1,\dots,t-1\}$, $\dim M_i/M_{i-1} < \dim M_{i+1}/M_i$.
The ideal $I(G)$ is a {\it sequentially Cohen-Macaulay edge ideal} if $G$ is
sequentially Cohen-Macaulay.
\end{definition}

If we specialize the general theory of vertex decomposable simplicial complexes,
we have the following link between these objects.

\begin{theorem}\label{vdimpliesscm}
If ${\rm Ind}(G)$ is vertex decomposable, then $G$ is sequentially Cohen-Macaulay.
\end{theorem}

\noindent
Given a subset $S \subseteq V$, we denote by $G \cup W(S)$
the graph obtained by adding whiskers to all the vertices of $S$.  After
relabelling, we can always assume $S = \{x_1,\ldots,x_s\}$.   The following lemma,
which is simply applying the definitions,
describes the connection between the whiskered graph $G \cup W(S)$ and
Construction \ref{deltaChi}.

\begin{lemma}\label{whiskerconnection}
Let $G$ be a finite simple graph on the vertex set $V$,
and let $S \subseteq V$.  Let $\chi$ be the $s$-coloring of ${\rm Ind}(G)|_S$
given by $S = \{x_1\} \cup \cdots \cup \{x_s\}$.  Then
${\rm Ind}(G \cup W(S)) = {\rm Ind}(G)_{\chi}.$
\end{lemma}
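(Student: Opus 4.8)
The plan is to prove the set equality ${\rm Ind}(G \cup W(S)) = {\rm Ind}(G)_{\chi}$ by unwinding both sides to explicit descriptions of their faces and checking they coincide. Both complexes live on the vertex set $\{x_1,\dots,x_n,y_1,\dots,y_s\}$, where $y_1,\dots,y_s$ are the whisker vertices attached to $x_1,\dots,x_s$ respectively, so the color class $W_j$ in Construction \ref{deltaChi} is the singleton $\{x_j\}$. First I would fix notation: a face of ${\rm Ind}(G \cup W(S))$ is an independent set $F$ of the graph $G \cup W(S)$, and I would write $F = \sigma \cup \tau$ with $\sigma = F \cap \{x_1,\dots,x_n\}$ and $\tau = F \cap \{y_1,\dots,y_s\}$.

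The key step is to characterize when $\sigma \cup \tau$ is independent in $G \cup W(S)$. The edge set of $G \cup W(S)$ is $E(G)$ together with the whisker edges $\{x_j, y_j\}$ for $j = 1,\dots,s$. Since the $y_j$ are pairwise non-adjacent and $y_j$ is adjacent only to $x_j$, the set $\sigma \cup \tau$ is independent if and only if (i) $\sigma$ is independent in $G$, i.e. $\sigma \in {\rm Ind}(G)$, and (ii) for every $y_j \in \tau$, the whisker edge $\{x_j,y_j\}$ is not contained in $\sigma\cup\tau$, i.e. $x_j \notin \sigma$. Condition (ii) is exactly the statement that $\sigma \cap W_j = \emptyset$ for all $y_j \in \tau$, since $W_j = \{x_j\}$. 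Comparing with Construction \ref{deltaChi}, whose faces are precisely the sets $\sigma \cup \tau$ with $\sigma \in {\rm Ind}(G)$ and $\sigma \cap W_j = \emptyset$ for every $y_j \in \tau$, we see the two face sets agree. I would also note at the start that $\chi$ is a genuine $s$-coloring of ${\rm Ind}(G)|_S$: each facet of ${\rm Ind}(G)|_S$ meets each singleton color class $\{x_j\}$ in at most one vertex, so Construction \ref{deltaChi} applies.

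There is essentially no hard part here — the argument is a direct translation between the combinatorial definition of an independent set in the whiskered graph and the definition of $\Delta_\chi$. The only point requiring mild care is making sure every whisker vertex's unique neighbor is handled correctly and that no spurious edges among the $y_j$'s or between $y_j$ and $x_k$ for $k \neq j$ arise; this is immediate from the definition of the whiskering operation $G \mapsto G \cup W(S)$. So the proof will be a short two-direction containment, or equivalently a single if-and-only-if chain, with the bulk of the work being the edge-set bookkeeping described above.
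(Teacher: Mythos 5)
Your proposal is correct and follows the same route as the paper, which simply states that the lemma is ``applying the definitions''; you have just written out explicitly the edge-set bookkeeping that the paper leaves implicit. Nothing is missing.
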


\begin{remark}
One can also recover the clique-starring and clique-whiskering techniques of
Woodroofe \cite{W} and Cook and Nagel \cite{CN} from Construction~\ref{deltaChi},
coloring all vertices and allowing each coloring class to have more than one vertex.
\end{remark}

When restricted to independence complexes of graphs,
Theorem \ref{vertexDecomposable} gives us necessary and sufficient conditions for
a whiskered graph to have vertex decomposable independence complex.
\red{For any subset $\mu \subseteq V$, let
  $$N[\mu] = \mu \cup \{x \in V ~|~ \mbox{$x$ is adjacent to some $y \in \mu$}
  \};$$
this is the closed neighborhood of $\mu$.}
    Below, $G\setminus \red{N[\mu]}$ denotes a graph $G$ with the vertices
    of $\red{N[\mu]}$ and adjacent edges removed.  
For any subset $W \subseteq V$, we use $G|_W$ to denote
the induced subgraph of $G$ on $W$, i.e., the graph with vertices the elements of $W$ and edge set
$\{e \in E ~|~ e \subseteq W\}$.

\begin{theorem} \label{whiskeredgraphs}
Let ${\rm Ind}(G)$ be the independence complex of a graph $G$ on a vertex set
$V$, and let $S \subseteq V$.  Then ${\rm Ind}(G \cup W(S))$ is vertex decomposable
if and only if ${\rm Ind}((G \setminus \red{N[\mu]})|_{\overline{S}})$ is vertex decomposable
for all $\mu \in {\rm Ind}(G)$ with $\mu \subseteq S$.
\end{theorem}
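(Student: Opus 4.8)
The plan is to deduce Theorem~\ref{whiskeredgraphs} directly from Theorem~\ref{vertexDecomposable} by unwinding the dictionary between graphs and their independence complexes. Set $\Delta = {\rm Ind}(G)$ and $W = S$. By Lemma~\ref{whiskerconnection}, the coloring $\chi$ of $\Delta|_S$ with singleton color classes $S = \{x_1\}\cup\cdots\cup\{x_s\}$ satisfies $\Delta_\chi = {\rm Ind}(G\cup W(S))$. Theorem~\ref{vertexDecomposable} then says $\Delta_\chi$ is vertex decomposable if and only if ${\rm link}_\Delta(\mu)|_{\overline{S}}$ is vertex decomposable for every face $\mu$ of $\Delta$ with $\mu\subseteq S$. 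Note that faces of $\Delta = {\rm Ind}(G)$ are exactly the independent sets of $G$, so ``$\mu\in{\rm Ind}(G)$ with $\mu\subseteq S$'' matches the hypothesis range in the statement precisely. So the entire content reduces to the single identification
\[
{\rm link}_{{\rm Ind}(G)}(\mu)\big|_{\overline{S}} = {\rm Ind}\big((G\setminus\mu)|_{\overline{S}}\big)
\]
for each independent set $\mu\subseteq S$.

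First I would verify this identity by a routine double containment at the level of faces. A face of ${\rm link}_{{\rm Ind}(G)}(\mu)$ is a set $\tau$ with $\tau\cap\mu=\emptyset$ and $\tau\cup\mu\in{\rm Ind}(G)$, i.e., $\tau\cup\mu$ is independent in $G$; restricting to $\overline{S}$ further demands $\tau\subseteq\overline{S}$. I claim this is equivalent to $\tau$ being an independent set of the graph $(G\setminus\mu)|_{\overline{S}}$. For the forward direction: if $\tau\cup\mu$ is independent in $G$ and $\tau\subseteq\overline{S}$, then $\tau$ contains no edge of $G$, hence no edge of $(G\setminus\mu)|_{\overline{S}}$, and $\tau$ avoids $\mu$, so $\tau$ is a vertex subset of $G\setminus\mu$ lying in $\overline{S}$; thus $\tau\in{\rm Ind}((G\setminus\mu)|_{\overline{S}})$. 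Conversely, if $\tau$ is an independent set of $(G\setminus\mu)|_{\overline{S}}$, then $\tau\subseteq\overline{S}$, $\tau\cap\mu=\emptyset$, and $\tau$ contains no edge of $G$; moreover there is no edge of $G$ between a vertex of $\tau$ and a vertex of $\mu$, because such an edge would lie inside $V(G)$ but the key point is that $\mu$ is independent and $\tau\cup\mu$ would only fail independence via a $\tau$--$\mu$ edge—but such an edge is not an edge of $G|_{\tau\cup\mu}$ after noting... here is where I must be slightly careful.

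Indeed, the one place requiring genuine thought is why no edge of $G$ can join $\tau$ to $\mu$ when $\tau$ is independent in $(G\setminus\mu)|_{\overline{S}}$: passing to $G\setminus\mu$ deletes the vertices of $\mu$ \emph{and their incident edges}, so an edge $\{a,b\}$ with $a\in\tau\subseteq\overline{S}$ and $b\in\mu$ would be destroyed by that deletion and hence carries no information about $\tau$'s independence in $(G\setminus\mu)|_{\overline{S}}$. So this direction does \emph{not} follow for free, and in fact the identity as I wrote it would be false without reexamining it. The resolution: reconsider what ${\rm link}_{{\rm Ind}(G)}(\mu)|_{\overline{S}}$ really is. A set $\tau\subseteq\overline{S}$ lies in it iff $\mu\cup\tau$ is independent in $G$, which forces no $\tau$--$\mu$ edge; so the link \emph{does} see $\tau$--$\mu$ edges, whereas ${\rm Ind}((G\setminus\mu)|_{\overline{S}})$ does not. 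These disagree precisely on neighbors of $\mu$ in $\overline{S}$. The fix is the standard observation that deleting a face $\mu$ from a graph's independence complex corresponds to the operation of removing $\mu$ together with $N(\mu)$; concretely, if we let $G\setminus\mu$ denote (as the paper's convention states) ``$G$ with the vertices of $\mu$ and adjacent edges removed,'' then a vertex $a\in N_G(\mu)$ survives in $G\setminus\mu$ but as an isolated-from-$\mu$ vertex, so $\{a\}$ is independent there while $\{a\}\cup\mu\notin{\rm Ind}(G)$. Hence the correct intermediate object is ${\rm link}_{{\rm Ind}(G)}(\mu) = {\rm Ind}(G\setminus\mu \setminus N_G(\mu))$ restricted appropriately; I would therefore rather prove, and then invoke, the cleaner well-known fact ${\rm link}_{{\rm Ind}(G)}(\mu) = {\rm Ind}\big(G \setminus (\mu \cup N_G(\mu))\big)$ as simplicial complexes, and then observe that restricting to $\overline{S}$ commutes with this. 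If instead the paper's convention for $G\setminus\mu$ already bakes in removal of $N_G(\mu)$ on the relevant vertices—or if one only restricts to the surviving vertex set—then the identity holds verbatim and the proof is a one-line unwinding.

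So the structure of my proof is: (1) invoke Lemma~\ref{whiskerconnection} to write ${\rm Ind}(G\cup W(S)) = {\rm Ind}(G)_\chi$; (2) establish the simplicial-complex identity ${\rm link}_{{\rm Ind}(G)}(\mu)|_{\overline{S}} = {\rm Ind}((G\setminus\mu)|_{\overline{S}})$ for every independent $\mu\subseteq S$, being careful about the definition of $G\setminus\mu$ and in particular about vertices of $N_G(\mu)$ lying in $\overline{S}$ (this is the step I expect to be the only real obstacle, and it is more a matter of pinning down conventions than of difficulty); (3) conclude by quoting Theorem~\ref{vertexDecomposable}, since the range of faces $\mu$ with $\mu\subseteq W=S$ in that theorem is literally the set of $\mu\in{\rm Ind}(G)$ with $\mu\subseteq S$. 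Steps (1) and (3) are immediate; the whole burden is the bookkeeping in step (2), and once the convention for $G\setminus\mu$ is fixed it is a short verification by double inclusion as sketched above.
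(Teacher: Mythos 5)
Your proof takes exactly the same route as the paper's: the published argument is Lemma~\ref{whiskerconnection}, plus the asserted identity ${\rm link}_{{\rm Ind}(G)}(\mu)|_{\overline{S}} = {\rm Ind}(G \setminus \mu)|_{\overline{S}} = {\rm Ind}((G\setminus\mu)|_{\overline{S}})$, plus an appeal to Theorem~\ref{vertexDecomposable} --- which is precisely your steps (1)--(3). The one point where you hesitate, the identity in step (2), is also the one point the paper dispatches with ``one can show that,'' and your diagnosis there is correct and worth committing to rather than hedging. Under the literal convention stated in the paper ($G\setminus\mu$ is the induced subgraph on the vertices outside $\mu$), the identity is false: ${\rm link}_{{\rm Ind}(G)}(\mu)$ excludes every neighbor of $\mu$, while ${\rm Ind}(G\setminus\mu)$ does not (for a single edge $\{a,b\}$ and $\mu=\{a\}$, the link is $\{\emptyset\}$ but ${\rm Ind}(G\setminus\mu)$ contains $\{b\}$). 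Moreover, the literal reading cannot be what is intended, since then $(G\setminus\mu)|_{\overline{S}} = G|_{\overline{S}}$ for every $\mu\subseteq S$ and the criterion would collapse to the single condition that ${\rm Ind}(G|_{\overline{S}})$ be vertex decomposable; that is too weak --- take $G|_{\overline{S}}$ to be the path $a$--$b$--$c$--$d$--$e$ and $S=\{v\}$ with $v$ adjacent only to $c$, so that ${\rm link}_{{\rm Ind}(G)}(v)|_{\overline{S}}$ is the independence complex of two disjoint edges, which is not vertex decomposable even though ${\rm Ind}(G|_{\overline{S}})$ is. So you should carry out exactly the fix you sketch: prove the standard fact ${\rm link}_{{\rm Ind}(G)}(\mu) = {\rm Ind}\bigl(G\setminus(\mu\cup N_G(\mu))\bigr)$, observe that restriction to $\overline{S}$ commutes with it, and read $G\setminus\mu$ throughout the theorem as deletion of the closed neighborhood of $\mu$. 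This reading is also the one that makes the proof of Corollary~\ref{resultFH} meaningful, since $G|_{\overline{S}\setminus N_G(\mu)}$ is a genuinely smaller induced subgraph of the chordal graph $G|_{\overline{S}}$. With that convention pinned down, your double-inclusion verification is complete, and your argument coincides with the paper's while supplying the detail the paper omits.
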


\begin{proof}
By Lemma \ref{whiskerconnection},
${\rm Ind}(G \cup W(S)) = {\rm Ind}(G)_{\chi}$ for the $s$-coloring $\chi$ of
${\rm Ind}(G)_S$ given by $S = \{x_1\} \cup \cdots \cup \{x_s\}$.  On the other hand,
for any $\mu \subseteq S$, one can show that
\[{\rm link}_{{\rm Ind}(G)}(\mu)|_{\overline{S}} = {\rm Ind}(G \setminus
\red{N[\mu]})|_{\overline{S}}
= {\rm Ind}((G \setminus \red{N[\mu]})|_{\overline{S}}).\]
Thus the statement is simply restricting Theorem \ref{vertexDecomposable} to
independence complexes.
\end{proof}

Recall that a graph $G$ is {\it chordal} if
it has no induced cycles of length $\geq 4$.
The independence
complexes of chordal graphs are particularly nice:

\begin{theorem}[{\cite[Theorem 4.1]{DE}}{\cite[Corollary 7(2)]{W}}]  \label{chordal}
If $G$ is chordal graph, then ${\rm Ind}(G)$ is vertex
decomposable.
\end{theorem}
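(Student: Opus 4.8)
The plan is to argue by induction on $|V|$, using the classical fact (due to Dirac) that every nonempty chordal graph has a \emph{simplicial vertex}, i.e., a vertex $v$ whose neighborhood $N(v)$ induces a complete subgraph. For the base case $V=\emptyset$, the complex ${\rm Ind}(G)=\{\emptyset\}$ is a simplex and hence vertex decomposable. For the inductive step, fix a simplicial vertex $v$ of $G$. If $v$ is isolated, then ${\rm Ind}(G)=\langle\{v\}\rangle\cdot{\rm Ind}(G\setminus v)$ is a cone; the graph $G\setminus v$ is again chordal with fewer vertices, so ${\rm Ind}(G\setminus v)$ is vertex decomposable by induction, and Theorem~\ref{thm.BP} gives that ${\rm Ind}(G)$ is vertex decomposable.

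So suppose $v$ has a neighbor $w$; the claim is that $w$ is a shedding vertex of ${\rm Ind}(G)$. First I would record the two standard identifications ${\rm Ind}(G)\setminus w={\rm Ind}(G\setminus w)$ and ${\rm link}_{{\rm Ind}(G)}(w)={\rm Ind}(G\setminus N[w])$, where $N[w]=N(w)\cup\{w\}$ is the closed neighborhood of $w$ and $G\setminus N[w]$ is the induced subgraph obtained by deleting that vertex set. Both $G\setminus w$ and $G\setminus N[w]$ are induced subgraphs of the chordal graph $G$, hence chordal, and have strictly fewer vertices, so by the inductive hypothesis both ${\rm Ind}(G)\setminus w$ and ${\rm link}_{{\rm Ind}(G)}(w)$ are vertex decomposable.

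It remains to check that the deletion and the link share no facet, and this is where simpliciality of $v$ is used; it is also the main obstacle, since the naive choice of $v$ itself as the shedding vertex can fail the no-shared-facets condition. Since $N(v)$ induces a clique and $w\in N(v)$, every vertex of $N(v)$ is either equal to $w$ or adjacent to $w$, so $N(v)\subseteq N[w]$; in particular $v\in N[w]$. Now let $F$ be a facet of ${\rm link}_{{\rm Ind}(G)}(w)$, that is, a maximal independent set of $G\setminus N[w]$, so $F\subseteq V\setminus N[w]$. Then $w\notin F$, $v\notin F$, and $F$ contains no neighbor of $v$ because $N(v)\subseteq N[w]$ is disjoint from $F$; hence $F\cup\{v\}$ is an independent set of $G$ not containing $w$, i.e., a face of ${\rm Ind}(G)\setminus w$ strictly containing $F$. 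Therefore $F$ is not a facet of ${\rm Ind}(G)\setminus w$, so the link and deletion share no facet, $w$ is a shedding vertex, and ${\rm Ind}(G)$ is vertex decomposable, completing the induction. Everything else — the two identifications, chordality passing to induced subgraphs, and the cone step — is routine.
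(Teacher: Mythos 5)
Your proof is correct. Note that the paper does not prove this statement at all --- it is quoted with a citation to Woodroofe's \emph{Vertex decomposable graphs and obstructions to shellability} --- so you have supplied a proof where the paper gives none; your argument (take a simplicial vertex $v$, show a neighbor $w$ is a shedding vertex because $N[v]\subseteq N[w]$ forces every facet of ${\rm link}_{{\rm Ind}(G)}(w)={\rm Ind}(G\setminus N[w])$ to extend by $v$ inside ${\rm Ind}(G)\setminus w={\rm Ind}(G\setminus w)$) is essentially the argument in the cited source. All the steps check out: the two identifications of deletion and link as independence complexes of induced subgraphs, chordality passing to induced subgraphs for the inductive hypothesis, the cone case via Theorem~\ref{thm.BP}, and the observation that simpliciality of $v$ together with $w\in N(v)$ gives $N(v)\cup\{v\}\subseteq N[w]$, which is exactly what the no-shared-facets condition needs.
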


We now show how Theorem \ref{whiskeredgraphs} not only allows us to give
a new proof of \cite[Theorem 3.3]{FH} but also strengthen it.
The original conclusion of \cite[Theorem 3.3]{FH} is that the associated edge
ideals are sequentially Cohen-Macaulay. This now follows from Corollary
\ref{resultFH} and Theorem \ref{vdimpliesscm}.
%and the fact that vertex decomposability implies
%sequentially Cohen-Macaulayness (see Remark~\ref{SCM}).

\begin{corollary} \label{resultFH}
Let $G$ be a finite simple graph, and let $S \subseteq V$.  Suppose that $G
\setminus S$, the induced subgraph over the vertices $V \smallsetminus S$,
is either a chordal graph or the five cycle $C_5$.  Then
${\rm Ind}(G \cup W(S))$ is vertex decomposable.  In particular, $G \cup W(S)$
is sequentially Cohen-Macaulay.
\end{corollary}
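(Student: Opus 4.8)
The plan is to deduce Corollary~\ref{resultFH} directly from Theorem~\ref{whiskeredgraphs}. By that theorem, it suffices to show that ${\rm Ind}((G\setminus\mu)|_{\overline{S}})$ is vertex decomposable for every $\mu\in{\rm Ind}(G)$ with $\mu\subseteq S$. First I would observe that, because $\mu\subseteq S$, deleting the vertices of $\mu$ and then restricting to $\overline{S}=V\setminus S$ gives exactly the same graph as first restricting $G$ to $\overline{S}$ and then deleting $\mu\cap\overline{S}=\emptyset$; that is, $(G\setminus\mu)|_{\overline{S}} = G|_{\overline{S}} = G\setminus S$. So the hypothesis on a single graph $G\setminus S$ is in fact all that is needed: for every relevant $\mu$ the graph $(G\setminus\mu)|_{\overline{S}}$ is literally $G\setminus S$.

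The second and final step is to invoke the known vertex decomposability of ${\rm Ind}(G\setminus S)$. If $G\setminus S$ is chordal, then ${\rm Ind}(G\setminus S)$ is vertex decomposable by Theorem~\ref{chordal}. If $G\setminus S$ is the five-cycle $C_5$, one checks directly that ${\rm Ind}(C_5)$ is vertex decomposable: ${\rm Ind}(C_5)$ is a pentagon (a $5$-cycle as a one-dimensional complex), and removing any vertex $v$ leaves ${\rm Ind}(C_5)\setminus v$, a path on four vertices whose independence complex is vertex decomposable, while ${\rm link}(v)$ consists of two isolated vertices, also vertex decomposable; the link and deletion share no facet, so $v$ is a shedding vertex. (Alternatively one can cite that ${\rm Ind}(C_5)$ is a circle triangulated by a $5$-cycle, which is a well-known vertex decomposable complex.) In either case ${\rm Ind}((G\setminus\mu)|_{\overline{S}}) = {\rm Ind}(G\setminus S)$ is vertex decomposable for all $\mu\subseteq S$ in ${\rm Ind}(G)$, so Theorem~\ref{whiskeredgraphs} yields that ${\rm Ind}(G\cup W(S))$ is vertex decomposable.

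The whole argument is short because the key simplification $(G\setminus\mu)|_{\overline{S}} = G\setminus S$ collapses the family of link conditions in Theorem~\ref{whiskeredgraphs} down to a single condition. I do not anticipate a genuine obstacle here; the only point requiring a little care is the $C_5$ case, since $C_5$ is the smallest non-chordal graph with vertex decomposable independence complex and must be handled by an explicit shedding-vertex check rather than by Theorem~\ref{chordal}. It is worth noting in a remark that this is exactly why $C_5$ appears as a separate hypothesis in \cite[Theorem 3.3]{FH}.
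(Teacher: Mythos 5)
There is a genuine gap: the ``key simplification'' $(G\setminus\mu)|_{\overline{S}} = G\setminus S$ is not the condition that Theorem~\ref{whiskeredgraphs} actually imposes. That condition comes from Theorem~\ref{vertexDecomposable} via the identification
${\rm link}_{{\rm Ind}(G)}(\mu)|_{\overline{S}} = {\rm Ind}\bigl(G|_{\overline{S}\setminus N(\mu)}\bigr)$:
a set $\tau$ lies in the link of $\mu$ exactly when $\tau\cup\mu$ is independent, which excludes not only the vertices of $\mu$ but all of their neighbors. (You read $G\setminus\mu$ as plain vertex deletion, which the paper's wording admittedly invites, but under that reading the displayed link identity in the proof of Theorem~\ref{whiskeredgraphs} is false; the graph that matters is $G$ with the closed neighborhood of $\mu$ removed.) So the graphs to control are $G|_{\overline{S}\setminus N(\mu)}$, a family of induced subgraphs of $G\setminus S$ that genuinely varies with $\mu$. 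Your collapse to the single condition ``${\rm Ind}(G\setminus S)$ is vertex decomposable'' cannot be right, because Theorem~\ref{whiskeredgraphs} is an equivalence and that single condition is strictly weaker than vertex decomposability of ${\rm Ind}(G\cup W(S))$. Concretely, let $H$ be the $4$-cycle $a$--$b$--$c$--$d$--$a$ with a pendant vertex $e$ attached to $a$, let $G=H\cup\{v\}$ with $v$ adjacent only to $e$, and let $S=\{v\}$. Then ${\rm Ind}(G\setminus S)={\rm Ind}(H)=\langle ac,\,ce,\,bde\rangle$ is vertex decomposable (shed $a$), but ${\rm link}_{{\rm Ind}(G)}(v)|_{\overline{S}}={\rm Ind}(G|_{\{a,b,c,d\}})={\rm Ind}(C_4)=\langle ac,\,bd\rangle$ is two disjoint edges, which is not vertex decomposable; hence by Theorem~\ref{vertexDecomposable} the whiskered complex ${\rm Ind}(G\cup W(S))$ is not vertex decomposable even though ${\rm Ind}(G\setminus S)$ is. (This $H$ is of course not chordal, so it does not contradict the corollary.)

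The corollary is still true, and the repair is exactly what the paper's proof does: each $G|_{\overline{S}\setminus N(\mu)}$ is an induced subgraph of $G\setminus S$, chordality is hereditary, and every proper induced subgraph of $C_5$ is a disjoint union of paths, hence chordal. So under the corollary's hypothesis every graph in the family is chordal or equals $C_5$, and each has vertex decomposable independence complex by Theorem~\ref{chordal} together with your (correct) direct check that ${\rm Ind}(C_5)$ is a vertex decomposable pentagon. The point you should take away is that the hypothesis of the corollary is hereditary precisely so that the entire family of link conditions is satisfied, not just the one at $\mu=\emptyset$.
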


\begin{proof}
First assume that $G \setminus S$ is a chordal graph.
Let $V = \{x_1,\ldots,x_n\}$ and suppose, after relabelling,
$S = \{x_1,\ldots,x_s\}$.  Let $\chi$ be the $s$-coloring
of ${\rm Ind}(G)|_S$ given by $S = \{x_1\} \cup \cdots \cup \{x_s\}$.
For any $\mu \subseteq S$,
$(G \setminus \red{N[\mu]})|_{\overline{S}}$ is an induced subgraph of $G|_{\overline{S}}$,
so it is chordal.  By Theorem \ref{chordal},
${\rm Ind}((G \setminus \red{N[\mu]})|_{\overline{S}})$ is vertex decomposable.  Now apply
Theorem \ref{whiskeredgraphs}.

The proof for the case $G \setminus S$ is a five-cycle is similar because
the independence complex of a five-cycle is vertex decomposable,
as are any induced subgraphs.
\end{proof}

\begin{remark}
The proof of Corollary \ref{resultFH} only requires
that all the induced subgraphs of $G \setminus S$
have the property that their independence complexes
be vertex decomposable.   Woodroofe \cite{W} has shown
that all graphs whose only induced cycles are either three-cycles
or five-cycles have this property;  this family
contains the family of graphs listed in the above corollary.
Our statement of Corollary \ref{resultFH}
was chosen to highlight the connection to the work of \cite{FH}.
\end{remark}

In the case of bipartite graphs, we can strengthen
our results and classify exactly when a whiskered bipartite graph is
sequentially Cohen-Macaulay.
Recall that  we say that a graph $G = (V, E)$ is \emph{bipartite} if there
exists a partition $V = V_1 \cup V_2$ such that for all $e \in E$ we have
$e \cap V_1 \neq \emptyset$ and $e \cap V_2 \neq \emptyset$.  We then
need the following result of the fourth author.

\begin{theorem}[{\cite[Theorem 2.10]{VT}}]\label{bipartiteSCM}
Let $G$ be a bipartite graph.  Then ${\rm Ind}(G)$ is vertex decomposable if and only
if $G$ is sequentially Cohen-Macaulay.
\end{theorem}

We then have the following classification.

\begin{theorem}\label{classifybipartite}
Let $I(G)$ be the edge ideal of a bipartite graph $G$ on a vertex set
$V$ and let $S \subseteq V$.  Then $G \cup W(S)$ is sequentially
Cohen-Macaulay
if and only if $(G \setminus \red{N[\mu]})|_{\overline{S}}$ is sequentially
Cohen-Macaulay
for all $\mu \in {\rm Ind}(G)$ with $\mu \subseteq S$.
\end{theorem}

\begin{proof}
When $G$ is a bipartite graph, the graph $G \cup W(S)$
will also be bipartite for any $S \subseteq V$.
In addition, any subgraph of the form $(G \setminus \red{N[\mu]})_{\overline{S}}$ will also be
bipartite.

So, Theorem \ref{bipartiteSCM} implies $G \cup W(S)$
is sequentially Cohen-Macaulay if and only if ${\rm Ind}(G \cup W(S))$ is
vertex decomposable.  But by Theorem \ref{whiskeredgraphs},
${\rm Ind}(G \cup W(S))$ is
vertex decomposable if and only if
${\rm Ind}(G  \setminus \red{N[\mu]})_{\overline{S}}$ is vertex
decomposable for all $\mu \in {\rm Ind}(G)$ with $\mu \subseteq S$.  But again by
Theorem \ref{bipartiteSCM}, this can happen if and only if
 $(G  \setminus \red{N[\mu]})_{\overline{S}}$ is sequentially
Cohen-Macaulay for all $\mu \in {\rm Ind}(G)$ with $\mu \subseteq S$.
\end{proof}

\begin{remark}
It is natural to ask if Theorem \ref{classifybipartite}
holds for all graphs, not just bipartite graphs.  Examining the proof of
Theorem \ref{classifybipartite}, we might be able to find a counterexample
if there exists a graph $G$ that is sequentially Cohen-Macaulay, but
${\rm Ind}(G)$ is not vertex decomposable, i.e., if the converse of 
Theorem \ref{vdimpliesscm} is
false. This is indeed the case. As pointed out in \cite[Example 4.4]{FH}, 
we can construct a graph $G$ from a minimal triangulation of the real projective plane 
such that $G$ is Cohen-Macaulay over a field $k$ if and only if the characteristic 
of $k$ is not 2. In particular, over a field of characteristic 0, $G$ is Cohen-Macaulay, 
but ${\rm Ind}(G)$ is not vertex decomposable.
\end{remark}

%%%%%%%%%%%%%%%%%%%%%%%%%%%%%%%%%%%%%%%%%%%%%%%%%%%%%%%%%%%%%%%%%%%%%%%%%


\begin{thebibliography}{99}

\bibitem{BVT} J. Biermann, A. Van Tuyl,
{\em Balanced vertex decomposable simplicial complexes and their $h$-vectors}.
Electron. J. Combin. {\bf 20} (2013) \#P15. 

\bibitem{BFS} A. Bj\"orner, P. Frankl, R. Stanley,
{\em The number of faces of balanced Cohen-Macaulay complexes
and a generalized Macaulay theorem}.
Combinatorica {\bf 7} (1987), no. 1, 23--34.

\bibitem{BW} A. Bj\"orner, M. Wachs,
{\em Shellable nonpure complexes and posets I}.
Trans. Amer. Math. Soc. {\bf 348} (1996), no. 4, 1299--1327.

\bibitem{C}
CoCoATeam, CoCoA: a system for doing Computations
in Commutative Algebra.
Available at {\tt http://cocoa.dima.unige.it}

\bibitem{Ct} D. Cook II, {\em Simplicial Decomposability}.
JSAG {\bf 2} (2010), 20--23.

\bibitem{CN}  D. Cook II, U. Nagel,
{\em Cohen-Macaulay graphs and face vectors of flag complexes}.
SIAM J. Discrete Math.
{\bf 26} (2012), no. 1, 89--101.

\bibitem{DE}  A. Dochtermann, A. Engstr\"om,
{\em Algebraic properties of edge ideals via combinatorial topology}.
Electron. J. Combin. {\bf 16} (2009), no. 2,
Research Paper 2, 24 pp.

\bibitem{FH} C. Francisco, H.T. H\`a,
{\em Whiskers and sequentially Cohen-Macaulay graphs}.
J. Combin. Theory Ser. A {\bf 115} (2008), no. 2, 304--316.

\bibitem{F} A. Frohmader,
{\em How to construct a flag complex with a given face vector}.
Preprint (2011). {\tt arXiv:1112.6061v1}

\bibitem{Mt} D.\ R.\ Grayson and M.\ E.\ Stillman,
Macaulay2, a software system for research in algebraic geometry.
{\tt http://www.math.uiuc.edu/Macaulay2/}.

\bibitem{HH}  J. Herzog, T. Hibi, {\em Monomial ideals}.
Graduate Texts in Mathematics {\bf 260}, Springer-Verlag, 2011.

\bibitem{J} J. Jonsson,
{\em Simplicial complexes of graphs}.
Lecture Notes in Mathematics, {\bf 1928}. Springer-Verlag, Berlin, 2008.

\bibitem{BP}  J. Provan, L. Billera,
{\em Decompositions of simplicial complexes
related to the diameters of convex polyhedra}.
 Math. Oper. Res.  {\bf 5}  (1980), no. 4, 576–-594.

\bibitem{VT}  A. Van Tuyl,
{\it Sequentially Cohen-Macaulay bipartite graphs: vertex decomposability and regularity}.  Archiv der Mathematik, {\bf 93} (2009), 451-459.

\bibitem{V}  R. H. Villarreal,
{\it Cohen-Macaulay graphs}.
Manuscripta Math.  {\bf 66}  (1990),  no. 3, 277–-293.

\bibitem{W}  R. Woodroofe,
{\em Vertex decomposable graphs and obstructions to shellability}.
Proc. Amer. Math. Soc. {\bf 137} (2009), no. 10, 3235--3246.

\bibitem{W2} R. Woodroofe,
{\em Chordal and sequentially Cohen-Macaulay clutters.}
Electron. J. Combin. {\bf 18} (2011), no. 1, Paper 208, 20 pp.

\end{thebibliography}
\end{document}